\newcommand{\tikzmark}[2][]{\tikz[remember picture, overlay]\node[#1,inner sep=0pt,xshift=-\tabcolsep](#2){};\ignorespaces}
\newcommand{\tm}[2][]{\tikzmark[yshift=#1]{#2}}
\newcommand\tboldsymbol[1]{%
\protect\raisebox{0pt}[0pt][0pt]{%
$\underset{\widetilde{}}{\boldsymbol{#1}}$}\mbox{\hskip 1pt}}
\newcommand{\ca}[1]{\ensuremath{\mathcal{#1}}}
\newcommand{\n}{\ensuremath{n \in \nat}}
\newcommand{\set}[2]{\ensuremath{\{#1 \hspace{0.3mm} \mid \hspace{0.3mm} #2\}}}
\newcommand{\ie}{{\rm i.e.,} }
\newcommand{\N}{\ensuremath{\mathbb N}}
\newcommand{\lh}{{\rm lh}}
\newcommand{\R}{\ensuremath{\mathbb R}}
\newcommand{\ep}{\ensuremath{\varepsilon}}
\newcommand{\nat}{\mathbb{N}}
\newcommand{\pair}[1]{[#1]}
\newcommand{\bolds}{\tboldsymbol{\Sigma}}
\newcommand{\boldp}{\tboldsymbol{\Pi}}
\newcommand{\cantor}{2^{\nat}}
\newcommand{\baire}{\nat^\nat}
\renewcommand{\d}{{\rm dpth}}
\renewcommand{\l}{{\rm lvl}}
\newcommand{\consymbol}{\ast}
\newcommand{\cn}[2]{#1\consymbol#2}
\newcommand{\norm}[1]{\|#1\|}
\newtheorem{theorem}{Theorem}
\newtheorem{corollary}[theorem]{Corollary}
\begin{document}


\title{Intersections of $\ell^p$ spaces in the Borel hierarchy}

\author{Vassilios Gregoriades}

\thanks{The author would like to thank Vassili Nestoridis for helpful discussions.}

\address{National Technical University of Athens\\
School of Applied Mathematical and Physical Sciences\\
9, Iroon Polytechniou St, Athens\\
Greece, Postal Code 157 80}

\email{vgregoriades@math.ntua.gr}


\begin{abstract}
We show that if $Y$ is one of the spaces $\ell^q$, $c_0$, $\ell^\infty$ or ${\textstyle \bigcap_{p > b}} \ell^p$ where $0 < q,b < \infty$, and the Fr\'{e}chet space $\textstyle \bigcap_{p > a} \ell^p$ is contained in $Y$ properly, then $\textstyle \bigcap_{p > a} \ell^p$ first shows up in the Borel hierarchy of $Y$ at the multiplicative class of the third level. In particular $\textstyle \bigcap_{p > a} \ell^p$ is neither an $F_\sigma$ nor a $G_\delta$ subset of $Y$. This answers a question by Nestoridis. This result provides a natural example of a set in the third level of the Borel hierarchy and with its help we also give some examples in the fourth level.
\end{abstract}

\keywords{Borel hierarchy, intersection of sequential spaces, topological genericity, $\bolds^0_3$-complete, $\boldp^0_3$-complete}
\subjclass[2010]{primary: 46B25, 54H05, secondary: 46A45}

\date{\today}

\maketitle


\section{Introduction and Results}

It is frequent in analysis to encounter $\subseteq$-chains of topological vector spaces $(X_i)_{i \in (\ca{I},\preceq)}$ and ask questions about their relation with respect to the structure that they carry. For example the questions whether $X_i$ is a meager $F_\sigma$ subset of $X_j$ (which implies topological genericity) or whether $\left(X_j \setminus X_i\right) \textstyle \ \bigcup \ \{0\}$ contains an infinite dimensional closed subspace (spaceability), where $X_i \subseteq X_j$, have received notable attention; see for example \cite{bayart_grosse-erdmann_nestoridis_papadimitropoulos_abstract_theory_of_universal_series_and_applications,kahane_Baires_category_theorem_and_trigonometric_series,grosse-erdmann_universal_families_and_hypercyclic_operators,gonzalez_pellegrino_seoane-sepulveda_linear_subsets_of_non_linear_sets_in_tvs} and \cite{nestoridis_a_project_about_chains_of_spaces}.

In this article we are concerned with the chain of $\ell^p$ spaces, $p \in (0,\infty)$, together with $c_0$ and $\ell^\infty$ under the topological aspect. As it was shown by Nestoridis the space $\ell^p$ is an $F_\sigma$ meager subset of $\ell^q$, where $0 < p < q < \infty$; in fact the same is true if instead of $\ell^q$ we have one of the following spaces $c_0$, $\ell^\infty$ or ${\textstyle \bigcap_{q > b}} \ell^q$ for $b > p$, see \cite[Proposition 1]{nestoridis_a_project_about_chains_of_spaces}. Consequently, if $Y$ is one of the latter spaces and $a \in [0,\infty)$ is such that ${\textstyle \bigcap_{p > a}} \ell^p$ is contained in $Y$ properly, then the intersection ${\textstyle \bigcap_{p > a}} \ell^p$ is contained in a meager $F_{\sigma}$ subset of $Y$; but it is left open in \cite{nestoridis_a_project_about_chains_of_spaces} (see the comments following the proof of Proposition 1) if the latter intersection remains $F_\sigma$ in $Y$.

We prove that if $Y$ is one of the spaces $\ell^q$, $c_0$, $\ell^\infty$ or ${\textstyle \bigcap_{p > b}} \ell^b$ where $0 < q,b < \infty$, then the intersection $\textstyle \bigcap_{p > a} \ell^p$ first shows up in the Borel hierarchy of $Y$ at the multiplicative class of the third level, where $0 \leq a <q$ if $Y = \ell^q$, $0 \leq a < b$ if $Y={\textstyle \bigcap_{p > b}} \ell^p$, and $0 \leq a <\infty$ if $Y = c_0,\ell^\infty$. In particular ${\textstyle \bigcap_{p > a}} \ell^p$ is not an $F_\sigma$ or a $G_\delta$ subset of $Y$; this answers Nestoridis' question. This result provides a natural example of a set in the third level of the Borel hierarchy and with its help we also give some examples in the fourth level. 

We proceed with a brief review of the necessary notions. First we clarify that we include $0$ in the set of natural numbers and consequently all our sequences (unless stated otherwise) have a $0$-th term. The sequential space $\ell^p$, where $0 < p < \infty$ is the vector space of all real sequences $(x_n)_{\n}$ for which $\sum_{n=0}^\infty |x_n|^p < \infty$. As it is well-known the spaces $\ell^p$, $p > 0$, are increasing on $p$ and in fact for all $0 < p < q$ and all sequences $(x_n)_{\n} \in \ell^p$ we have $\left(\sum_{n=0}^\infty |x_n|^q\right)^{1/q} \leq \left(\sum_{n=0}^\infty |x_n|^p\right)^{1/p}$.\smallskip

When $p \geq 1 $ the space $\ell^p$ admits the norm
\[
\norm{x}_p = \left(\sum_{n=0}^\infty|x_n|^p\right)^{1/p}, \quad x = (x_n)_{\n} \in \ell^p.
\]

Then $(\ell^p, \norm{\cdot}_p)$ is a separable Banach space, $1 \leq p < \infty$.

If $0 < p < 1$ we define the metric $d_p$ on $\ell^p$ by
\[
d_p(x,y) = \sum_{n=0}^\infty |x_n-y_n|^p, \quad x = (x_n)_{\n},  \ y = (y_n)_{\n} \in \ell^p.
\]
Then $(\ell^p,d_p)$ is a complete separable metric space.

We are also concerned with spaces of the form $\textstyle \bigcap_{q > b} \ell^q$ where $0 \leq  b < \infty$.  First we fix once and for all sequences $(p^b_i)_{i \in \nat}$, $b \geq 0$, of positive real numbers such that $p^b_i \downarrow b$; if $b < 1$ we assume that $p^b_0 < 1$. We define the metric $d_{>b}$ on $\textstyle \bigcap_{q > b} \ell^q$ depending on the value of $b$. If $b \geq 1$ we define
\[
d_{q > b}(x,y) = \sum_{i=0}^\infty 2^{-(i+1)} \cdot \dfrac{\norm{x-y}_{p^b_i}}{1+\norm{x-y}_{p^b_i}}, \quad x,y \in \ell^b,
\]
(notice that $p^b_i > 1$ so that $\norm{\cdot}_{p^b_i}$ is defined). 

If $b < 1$ we define 
\[
d_{q > b}(x,y) = \sum_{i=0}^\infty 2^{-(i+1)} \cdot \dfrac{d_{p^b_i}(x,y)}{1+d_{p^b_i}(x,y)}, \quad x,y \in \ell^b,
\]
where $d_{p^b_i}$ is as above (notice that $p^b_i < 1$).\smallskip
 
Then $(\textstyle \bigcap_{q > b} \ell^q,d_{>b})$ is a complete separable metric space.\smallskip

The sequence space $\ell^\infty$ is the vector space of all {\em bounded} real sequences and is equipped with the {\em supremum norm}, $\norm{x}_\infty = \sup\set{|x_n|}{\n}$, $x = (x_n)_{\n} \in \ell^\infty$. Then $(\ell^\infty,\norm{\cdot}_\infty)$ is a non-separable Banach space.\smallskip

The space $c_0$ consists of all real sequences that converge to $0$ and with the (restriction of the) supremum norm $\norm{\cdot}_{\infty}$ it is a separable Banach space.\smallskip

Next we employ some tools from descriptive set theory. Given a metric space $X$ we denote by $\bolds^0_1(X)$ the family of all open subsets of $X$ and by $\boldp^0_1(X)$ the family of all closed subsets of $X$. Recursively we define $\bolds^0_{n+1}(X)$ to be the family of all countable unions of $\boldp^0_n(X)$ sets and $\boldp^0_{n+1}(X)$ to be their complements in $X$. For example $\bolds^0_2(X)$ is the family of all $F_\sigma$ subsets of $X$, $\boldp^0_2(X)$ is the family of all $G_\delta$ subsets of $X$ and so on. Instead of $A \in \bolds^0_n(X)$ we will say that $A$ is a $\bolds^0_n$ subset of $X$, or simpler that $A$ is a $\bolds^0_n$ set when $X$ is understood from the context. The classes $\bolds^0_n$ and $\boldp^0_n$ are also known as the \emph{additive} and the \emph{multiplicative} classes of the $n$-th level in the Borel hierarchy. It is easy to see that the pre-image of a $\bolds^0_n$ subset $B$ of $Y$ under a continuous function $f: X \to Y$ is a $\bolds^0_n$ subset of $X$; obviously the same holds for $\boldp^0_n$.

It is well-known that $\bolds^0_{n}(X) \textstyle \cup \boldp^0_n(X) \subseteq \bolds^0_{n+1}(X) \textstyle \cap \boldp^0_{n+1}(X)$, \ie every $\bolds^0_n(X)$ (and consequently every $\boldp^0_n(X)$) set is both $\bolds^0_{n+1}(X)$ and $\boldp^0_{n+1}(X)$. If $X$ is an uncountable complete separable metric space then $\bolds^0_n(X) \neq \boldp^0_n(X)$. 

Recall that a topological space is {\em Polish} if it generated by a complete separable metric space. It is clear that the preceding definitions can be given in the context of Polish spaces and it is irrelevant which accompanying complete metric we choose.

In the sequel we will employ the \emph{Baire space} $\baire$ with the product topology. This is a Polish space and a basis for its topology is given by the family of all sets of the form
\[
N(s_0,\dots,s_n) = \set{\alpha \in \baire}{\alpha(i) = s_i \quad \text{for all} \ i=0,\dots,n},
\]
where $s_0,\dots, s_n \in \nat$. Evidently these sets are clopen and therefore $\baire$ has a basis consisting of clopen sets, \ie it is a \emph{zero-dimensional} Polish space. The \emph{Cantor space} is $\{0,1\}^\nat \equiv \cantor$, and is a closed subspace of the Baire space. 

A \emph{continuous reduction} of a set $A \subseteq Z$ to a set $B \subseteq X$ is a continuous function $f: Z \to X$ such that $A = f^{-1}[B]$.  If a non-$\bolds^0_n$ set $A$ continuously reduces to a given set $B \subseteq X$, then $B$ cannot be a $\bolds^0_n$ subset $X$, since the class $\bolds^0_n$ is closed under continuous pre-images. This is a standard technique for showing that a given set $B$ is not $\bolds^0_n$: we start with a known non-$\bolds^0_n$ set $A$ and we show that it continuously reduces to $B$. In fact we usually show a slightly stronger property that is worth mentioning:

A set $P \subseteq X$ is $\boldp^0_n$\emph{-complete} if it is a $\boldp^0_n$ subset of $X$ and every $\boldp^0_n(Z)$ set $A$, where $Z$ is a zero-dimensional Polish space, continuously reduces to $P$. Analogously one defines the notion of $\bolds^0_n$-completeness, and it is clear that $P \subseteq X$ is $\boldp^0_n$-complete exactly when $X \setminus P$ is $\bolds^0_n$-complete.

It is easy to see that a $\boldp^0_n$-complete set $P \subseteq X$ cannot be $\bolds^0_n(X)$: since $\bolds^0_n(\cantor) \neq \boldp^0_n(\cantor)$ we can find some $A \in \boldp^0_n(\cantor) \setminus \bolds^0_n(\cantor)$; the  set $A$ continuously reduces to $P$, so if $P$ were $\bolds^0_n$ then $A$ (being the continuous pre-image of a $\bolds^0_n$ set) would be a $\bolds^0_n$ subset of $\cantor$, a contradiction.

Moreover it is clear that if a $\boldp^0_n$-complete set $P$ continuously reduces to a $\boldp^0_n$ set $B$ then $B$ is $\boldp^0_n$-complete as well. We can now state our main result.

\begin{theorem}
\label{theorem p complete}
For all $a,q$ with $0\leq a < q < \infty$ the intersection ${\textstyle \bigcap_{p > a}} \ell^p$ is a $\boldp^0_3$-complete subset of $\ell^q$. Moreover the continuous reductions can be chosen to take values in the closed unit ball of $\ell^q$. \smallskip

It follows that ${\textstyle \bigcap_{p > a}} \ell^p \not \in \bolds^0_3(\ell^q)$ and therefore ${\textstyle \bigcap_{p > a}} \ell^p$ is neither an $F_\sigma$ nor a $G_\delta$ subset of $\ell^q$.
\end{theorem}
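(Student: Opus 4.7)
The argument splits into an upper-bound part and a completeness part. For the upper bound, since $p^a_n \downarrow a$, we have $\bigcap_{p > a} \ell^p = \bigcap_n \ell^{p^a_n}$ as subsets of $\ell^q$, and by \cite[Proposition~1]{nestoridis_a_project_about_chains_of_spaces} each $\ell^{p^a_n}$ is $F_\sigma$ (that is, $\bolds^0_2$) in $\ell^q$. A countable intersection of $\bolds^0_2$ sets is $\boldp^0_3$ by definition, giving $\bigcap_{p > a} \ell^p \in \boldp^0_3(\ell^q)$.

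For $\boldp^0_3$-completeness the plan is to continuously reduce a canonical $\boldp^0_3$-complete set. I would fix, for each $k \in \nat$, a $\bolds^0_2$-complete subset $E_k \subseteq \baire$ and consider
\[
\mathcal{P} := \set{\alpha = (\alpha_k)_k \in \baire^{\nat}}{\forall k,\ \alpha_k \in E_k},
\]
which is $\boldp^0_3$-complete by a standard product argument. The goal is then a continuous $\Phi: \baire^\nat \to \ell^q$ whose image lies in the closed unit ball and satisfies $\Phi(\alpha) \in \bigcap_{p > a} \ell^p \iff \alpha \in \mathcal{P}$. The natural structure is to split the positions of $\ell^q$ into pairwise disjoint blocks $B_k$ indexed by $k$, with the $k$-th block encoding $\alpha_k$ in a way calibrated to the critical exponent $p^a_k$: each block is built from the template sequence $y^{(k)}_m = C_k m^{-1/p^a_k}$, which lies in $\ell^p$ exactly for $p > p^a_k$, with $C_k$ chosen so that $\norm{y^{(k)}}_q \leq 2^{-k}$ (passing to a tail of $(p^a_n)$ so that $p^a_n < q$ for every $n$). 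The concrete dependence on $\alpha_k$---a truncation of the prefix of $y^{(k)}$ to length governed by $\alpha_k$, or a pointwise modulation of its entries by a bounded function of $\alpha_k$---is what pins down $E_k$, via the condition $h_k(\alpha_k) \in \ell^{p^a_k}$.

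Once such a $\Phi$ is in hand the conclusion is immediate: $\mathcal{P}$ reduces continuously to $\bigcap_{p > a} \ell^p$ in $\ell^q$, and since $\bolds^0_3(\cantor) \ne \boldp^0_3(\cantor)$ any $\boldp^0_3$-complete set in an uncountable Polish space is not $\bolds^0_3$, so $\bigcap_{p > a} \ell^p \notin \bolds^0_3(\ell^q)$, in particular neither $F_\sigma$ nor $G_\delta$.

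The main obstacle is the simultaneous control of $\norm{\Phi(\alpha)}_{p^a_l}$ at every exponent $l$. On the ``good'' side ($\alpha \in \mathcal{P}$) I need $\sum_k 2^{-k p^a_l} \norm{h_k(\alpha_k)}_{p^a_l}^{p^a_l} < \infty$ for every $l$; the nesting $\ell^{p^a_k} \subseteq \ell^{p^a_l}$ for $k \geq l$ (using that $p^a_\cdot$ is decreasing) handles blocks with $k \geq l$ automatically from $h_k(\alpha_k) \in \ell^{p^a_k}$, but blocks with $k < l$ require a separate estimate of the form $\norm{h_k(\alpha_k)}_{p^a_l}^{p^a_l} \leq C_k^{p^a_l} \zeta(p^a_l/p^a_k)$ tailored to the choice of $h_k$. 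On the ``bad'' side, for the $k_0$ with $\alpha_{k_0} \notin E_{k_0}$ the level-$k_0$ block must contribute an infinite $\ell^{p^a_{k_0}}$-norm, which forces $E_k$ to be a weighted-summability condition adapted to $y^{(k)}$ rather than a crude combinatorial one (since for sparse supports a naive indicator encoding would fail to blow up at $\ell^{p^a_k}$). Reconciling these two sides---ensuring that the blowup at the critical exponent survives the sum over all levels without being spuriously reproduced in the good case---is the delicate point of the construction.
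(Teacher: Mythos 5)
Your upper bound is fine and coincides with the paper's (a countable intersection of $\bolds^0_2$ sets is $\boldp^0_3$, with the $F_\sigma$-ness of each $\ell^{p^a_n}$ in $\ell^q$ taken from Nestoridis). The high-level completeness strategy is also the right one, and your target set $\mathcal{P} = \bigcap_k \pi_k^{-1}[E_k] \subseteq \baire^\nat$ with each $E_k$ a $\bolds^0_2$-complete set is indeed $\boldp^0_3$-complete; the paper instead uses the single set $P_3 = \set{\alpha \in \cantor}{(\forall i)(\exists j_0)(\forall j \geq j_0)\ \alpha(\pair{i,j}) = 0}$, but the two are interchangeable. The gap is entirely in the reduction itself.

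The trouble is structural, and it is precisely the one you flag as ``the delicate point'' without resolving. In your scheme the $k$-th block is a continuous function $h_k(\alpha_k)$ of the single coordinate $\alpha_k$. Set $g_k(\alpha_k) := \norm{h_k(\alpha_k)}_{p^a_k}^{p^a_k}$; this is lower semicontinuous on $\baire$ (continuity of $h_k$ into $\ell^q$ composed with the Fatou-lower-semicontinuous functional $x \mapsto \sum_n|x_n|^{p^a_k}$). If $E_k$ is dense -- which it is for the standard choices such as the eventually-zero sequences -- then $g_k$ bounded on $E_k$ forces $g_k$ bounded everywhere (the open set $\{g_k > B\}$ misses the dense set), contradicting $g_k = \infty$ off $E_k$. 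So $g_k$ must be unbounded on $E_k$. But then for every $k$ you may choose $\alpha_k \in E_k$ with $g_k(\alpha_k)$ as large as you like, say large enough to defeat any fixed scaling factor $2^{-k}$ or otherwise, and the resulting $\alpha$ belongs to $\mathcal{P}$ while $\Phi(\alpha) \notin \bigcap_{p>a}\ell^p$. Passing to a nowhere dense $E_k$ or redesigning $h_k$ merely shifts the difficulty: a block that sees only $\alpha_k$ has no information with which to coordinate its size with the other blocks, and the ``good''-side sum $\sum_k \norm{h_k(\alpha_k)}_{p^a_l}^{p^a_l}$ cannot be kept finite for all $l$.

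The paper's construction resolves exactly this by giving up per-coordinate independence. It builds $\varphi(\sigma)$ recursively over finite binary strings $\sigma$ together with adaptive integer bounds $M_i(\sigma)$ for $0 \le i \le \d(\sigma)$, maintaining the invariant $\norm{\varphi(\sigma)}_{p_i}^{p_i} < M_i(\sigma)$. The engine is a Claim: given $u$ with $\norm{u}_{p_i}^{p_i} < r_i$ for $i \le k$, one may append a block $v$ of arbitrarily small $q$-norm so that $\norm{\cn{u}{v}}_{p_i}^{p_i} < r_i$ still holds for $i \le k$ while $\norm{\cn{u}{v}}_{p_{k+1}}^{p_{k+1}}$ exceeds any prescribed threshold. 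Appending a $0$ leaves all $M_i$ unchanged; appending a $1$ at level $\l$ leaves $M_i$ unchanged for $i < \l$ and increases only the higher-index bounds. Consequently, for $\alpha \in P_3$ and fixed $i$, row $i$ of the diagonal arrangement eventually contains only $0$'s, $M_i$ stabilizes at some finite value, and $\norm{f(\alpha)}_{p_i}^{p_i} < \infty$; for $\alpha \notin P_3$ some row $i$ contains infinitely many $1$'s, each pushing $\norm{\varphi(\cdot)}_{p_i}^{p_i}$ past $\lh(\sigma)+1$, so $f(\alpha) \notin \ell^{p_i}$. That prefix-dependent bookkeeping is the ingredient your block scheme is missing, and without it the reduction cannot be completed.
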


Using the preceding result we can see that for every $\delta >0$ the continuous reductions in the latter can be chosen to take values in the closed $\delta$-ball of $\ell^q$ centered at $0$. This is because ${\textstyle \bigcap_{p > a}} \ell^p$ is a linear space and hence for every function $f: Z \to \ell^q$ that reduces some $Q$ to ${\textstyle \bigcap_{p > a}}\ell^p$ we will have
\[
z \in Q \iff f(z) \in {\textstyle \bigcap_{p > a}} \ell^p \iff \delta \cdot f(z) \in {\textstyle \bigcap_{p > a}}\ell^p,
\]
for all $z \in Z$. In other words the function $\delta \cdot f$ remains a reduction of $Q$ to ${\textstyle \bigcap_{p > a}}\ell^p$.

\begin{corollary}
\label{corollary intersection above b}
Suppose that $0\leq a < b < \infty$ and let $Y$ be one of the spaces  ${\textstyle \bigcap_{q > b}}\ell^q$, $c_0$ or $\ell^\infty$. Then the intersection ${\textstyle \bigcap_{p > a}} \ell^p$ is a $\boldp^0_3$-complete subset of $Y$, and in particular ${\textstyle \bigcap_{p > a}} \ell^p$ is neither an $F_\sigma$ nor a $G_\delta$ subset of $Y$.
\end{corollary}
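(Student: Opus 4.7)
The plan is to transfer the $\boldp^0_3$-completeness established in Theorem \ref{theorem p complete} to the spaces $Y$ by post-composing a continuous reduction into $\ell^q$ with a continuous inclusion $\ell^q \hookrightarrow Y$, for a suitably chosen $q \in (a,b)$ (resp.\ $q \in (a,\infty)$ when $Y \in \{c_0,\ell^\infty\}$). As always, both halves of $\boldp^0_3$-completeness must be checked: membership in $\boldp^0_3(Y)$ and $\boldp^0_3$-hardness.

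For the upper bound, I would fix a sequence $p_i \downarrow a$ with $p_i < b$ for all $i$ when $Y = {\textstyle\bigcap_{q>b}}\ell^q$ (any $p_i \downarrow a$ works when $Y \in \{c_0,\ell^\infty\}$). Then ${\textstyle\bigcap_{p>a}}\ell^p = {\textstyle\bigcap_i}\ell^{p_i}$, and by the Nestoridis proposition cited in the introduction, each $\ell^{p_i}$ is an $F_\sigma$ subset of $Y$. The intersection is thus $F_{\sigma\delta}$, that is, $\boldp^0_3$ in $Y$.

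For hardness, I would fix $q$ as above and apply Theorem \ref{theorem p complete}: for any $\boldp^0_3$ subset $P$ of a zero-dimensional Polish space $Z$, there is a continuous reduction $f: Z \to \ell^q$ of $P$ to ${\textstyle\bigcap_{p>a}}\ell^p$ whose image lies in the closed unit ball of $\ell^q$. Letting $\iota: \ell^q \hookrightarrow Y$ be the set-theoretic inclusion, the composition $\iota\circ f: Z \to Y$ continues to satisfy $(\iota \circ f)^{-1}\bigl[{\textstyle\bigcap_{p>a}}\ell^p\bigr] = P$ because ${\textstyle\bigcap_{p>a}}\ell^p \subseteq Y$; thus it witnesses the required hardness, provided only that $\iota$ is continuous.

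The one real technical point --- and the principal obstacle --- is the continuity of $\iota$. When $Y \in \{c_0, \ell^\infty\}$ this is immediate from $\|x\|_\infty \leq \|x\|_q$ if $q \geq 1$, and from $\|x\|_\infty \leq d_q(x,0)^{1/q}$ if $q < 1$. When $Y = {\textstyle\bigcap_{q'>b}}\ell^{q'}$ the metric $d_{>b}$ is a $2^{-(i+1)}$-weighted series built from the quantities $\|\cdot\|_{p_i^b}$ or $d_{p_i^b}$, and since each $p_i^b$ exceeds $b > q$, the elementary inequality $|u|^{p_i^b} \leq |u|^q$ for $|u| \leq 1$ dominates each summand by the $\ell^q$-distance on the closed unit ball; uniform summability of $\sum_i 2^{-(i+1)}$ then permits termwise passage to the limit, giving continuity of $\iota$. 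Composing with the reductions from Theorem \ref{theorem p complete} completes the argument.
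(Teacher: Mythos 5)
Your proposal is correct and follows essentially the same route as the paper: both halves (the $\boldp^0_3$ upper bound via Nestoridis's observation, and hardness by post-composing the reductions of Theorem~\ref{theorem p complete} with a continuous inclusion into $Y$) agree with the paper's argument. The only cosmetic difference is that the paper applies Theorem~\ref{theorem p complete} with $q=b$ and checks directly that $id:\ell^b\to Y$ is $1$-Lipschitz (near $0$ when $b<1$), whereas you take $q$ strictly between $a$ and $b$; also, for $b\ge 1$ the clean domination of the summands of $d_{>b}$ comes from the monotonicity $\norm{x-y}_{p^b_i}\le\norm{x-y}_q$ of the $\ell^p$-norms rather than from the pointwise inequality $|u|^{p^b_i}\le|u|^q$, though this is only a matter of phrasing and your conclusion stands.
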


\begin{proof}
From the the comments following the proof of Proposition 1 in \cite{nestoridis_a_project_about_chains_of_spaces} it follows that ${\textstyle \bigcap_{p > a}} \ell^p$ is a $\boldp^0_3$ subset of $Y$.

For the continuous reductions first we apply Theorem \ref{theorem p complete} with $\ell^b$ in the place of $\ell^q$. Next we notice that the identity $id: \ell^b \to Y$ is continuous. When $Y = c_0, \ell^\infty$ this is clear from the inequality $\norm{x}_\infty \leq \norm{x}_b$ for all $x \in \ell^b$. When $Y = {\textstyle \bigcap_{q > b}}\ell^q$ it is easy to verify that $d_{> b}(x,y) \leq \norm{x-y}_b$ for $b \geq 1$ and all $x, y \in \ell^b$, and also that $d_{> b}(x,y) \leq d_b(x,y)$ for $0 < b < 1$ and all $x,y \in \ell^b$ {\em with} $d_b(x,y) < 1$. 

\end{proof}

The notion of $\boldp^0_n$-completeness can be carried to the next level of the Borel hierarchy with the proper quantification and so from Theorem \ref{theorem p complete} we can get a natural example in the fourth level of the Borel hierarchy:

Given $0 \leq a < q < \infty$ the set $\ell^q \ \setminus \  {\textstyle \bigcap_{p > a}} \ell^p$ is a $\bolds^0_3$-complete subset of $\ell^q$. From \cite[Exercise 23.3]{kechris_classical_dst} the set of all sequences $(\vec{x}_{m})_{m \in \nat} \in \left(\ell^q\right)^\nat$ for which $\vec{x}_m \not \in {\textstyle \bigcap_{p > a}} \ell^p$ for all $m$, is a $\boldp^0_4$-complete subset of $\left(\ell^q\right)^\nat$.

We can in fact give the analogous result with $\ell^q$ in the place of $\left(\ell^q\right)^\nat$. Obviously we can view every sequence $(\vec{x}_{m})_{m \in \nat}$ of sequences as a double sequence $(x_{m,n})_{m,n \in \nat}$. Further we can put the latter in an infinite array whose $m$-row is $(x_{m,n})_{n \in \nat}$ and by a diagonal enumeration we can view $(x_{m,n})_{m,n \in \nat}$ as a single sequence. Moreover every single sequence can be identified with a double one using the preceding diagonal arrangement, therefore we can identify double sequences with the usual ones. It is clear that under this identification if $(x_{m,n})_{m,n \in \nat} \in \ell^q$ then for all $m$ the sequence $(x_{m,n})_{n \in \nat}$ is also a member of $\ell^q$ but the converse fails in general.

\begin{corollary}
\label{corollary s4 complete}
For all $a,q \in [0,\infty)$ with $a < q$ the following are $\bolds^0_4$- and $\boldp^0_4$-complete subsets of $\ell^q$ respectively:
\begin{align*}
A =& \ \set{(x_{m,n})_{m,n \in \nat} \in \ell^q}{(\exists m)[ \ (x_{m,n})_{n \in \nat} \ \in \ {\textstyle \bigcap_{p > a}} \ell^p\ ]}\\
B =& \ \set{(x_{m,n})_{m,n \in \nat} \in \ell^q}{(\forall m)[ \ (x_{m,n})_{n \in \nat} \ \not \in \ {\textstyle \bigcap_{p > a}} \ell^p\ ]},
\end{align*}
where a double sequence is identified with a usual one using a diagonal enumeration as above.\smallskip

The analogous statement holds if we replace $\ell^q$ with ${\textstyle \bigcap_{p > b}} \ell^p$ for $b > a$, $c_0$ or $\ell^\infty$.
\end{corollary}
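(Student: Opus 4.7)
The plan is to piece together countably many reductions obtained from Theorem \ref{theorem p complete} into a single continuous map $F : W \to \ell^q$ whose rows are those reductions. I observe first that $A$ and $B$ are complements in $\ell^q$, so it suffices to show that $B$ is $\boldp^0_4$-complete; the statement for $A$ follows at once. To see $B \in \boldp^0_4(\ell^q)$, let $\pi_m : \ell^q \to \ell^q$ send a sequence (decoded as a double one via the fixed diagonal bijection) to its $m$-th row. Then $\pi_m$ is a linear contraction, hence continuous, and $B = \bigcap_m \pi_m^{-1}[\ell^q \setminus Q]$ where $Q = {\textstyle \bigcap_{p > a}} \ell^p$. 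Since $Q \in \boldp^0_3(\ell^q)$ by Theorem \ref{theorem p complete}, this exhibits $B$ as a countable intersection of $\bolds^0_3$ sets.

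For $\boldp^0_4$-hardness of $B$, I let $W$ be a zero-dimensional Polish space and $C \in \boldp^0_4(W)$, writing $C = \bigcap_m D_m$ with each $D_m \in \bolds^0_3(W)$. Then $W \setminus D_m \in \boldp^0_3(W)$, and by Theorem \ref{theorem p complete} together with the scaling remark following it, for each $m$ I obtain a continuous $f_m : W \to \ell^q$ reducing $W \setminus D_m$ to $Q$ and taking values in the closed $\delta_m$-ball of $\ell^q$ centred at $0$, where $(\delta_m)_{m \in \nat}$ is any preassigned positive sequence. I choose $\delta_m = 2^{-m/q}$ when $q \geq 1$, so that $\sum_m \delta_m^q < \infty$; when $0 < q < 1$ I instead require $d_q(f_m(w),0) \leq \delta_m$ with $\sum_m \delta_m < \infty$.

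Define $F : W \to \ell^q$ by declaring the $m$-th row of the double sequence $F(w)$ (encoded as a single sequence by the fixed diagonal bijection) to be $f_m(w)$. The correctness of the reduction is immediate: $F(w) \in B$ iff $f_m(w) \notin Q$ for every $m$, iff $w \in D_m$ for every $m$, iff $w \in C$. Membership $F(w) \in \ell^q$ follows from $\|F(w)\|_q^q = \sum_m \|f_m(w)\|_q^q \leq \sum_m \delta_m^q < \infty$ (and analogously with $d_q$ in the $q < 1$ case). Continuity of $F$ is the one small calculation: for fixed $w_0 \in W$ and $\epsilon > 0$ I choose $M$ with $2^q \sum_{m > M} \delta_m^q < \epsilon^q / 2$, so that the tail rows contribute at most $\epsilon^q / 2$ to $\|F(w) - F(w_0)\|_q^q$ uniformly in $w$, while the first $M$ rows depend continuously on $w$ by continuity of the $f_m$.

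For the analogous statement with $Y \in \{{\textstyle \bigcap_{p > b}} \ell^p, c_0, \ell^\infty\}$ and $b > a$, the argument is identical but uses Corollary \ref{corollary intersection above b} in place of Theorem \ref{theorem p complete}: take each $f_m$ into $\ell^b$ with small $\|\cdot\|_b$-norm, then compose with the continuous identity $\ell^b \hookrightarrow Y$. The glued $F$ lands in $\ell^b$ and hence in $Y$, and its continuity into $Y$ follows from continuity into $\ell^b$ together with continuity of the embedding. The main technical obstacle throughout is the uniform tail estimate for continuity of $F$; but once $(\delta_m)$ decays geometrically, this is routine, and everything else is bookkeeping.
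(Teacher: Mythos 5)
Your argument is correct and follows essentially the same route as the paper: row projections give $\boldp^0_4$ membership, the reductions from Theorem \ref{theorem p complete} (scaled to have geometrically small $\ell^q$-norm) are glued row-by-row into a single map, and for the other target spaces $Y$ one routes through $\ell^b$ and composes with the continuous inclusion $\ell^b \hookrightarrow Y$. The only cosmetic difference is that you phrase the reductions as sending $W \setminus D_m$ to $Q$ rather than $D_m$ to $\ell^q \setminus Q$, and you make the uniform-tail continuity estimate slightly more explicit; both are the same idea.
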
 

\begin{proof}
Since $A$ is the complement of $B$ it is enough to prove that the latter is a $\boldp^0_4$-complete subset of $\ell^q$. First we show that $B$ is a $\boldp^0_4$ set. 

For all $m$ we define $h_m: \ell^q \to \ell^q: (x_{k,n})_{k,n \in \nat} \mapsto (x_{m,n})_{\n}$; notice that $\sum_{\n} |x_{m,n} - y_{m,n}|^q \leq \norm{(x_{k,n})_{k,n} - (y_{k,n})_{k,n}}_q^q$ for $q \geq 1$, where $(x_{k,n})_{k,n},  \ (y_{k,n})_{k,n} \in \ell^p$. The similar assertion holds for $d_q$ if $q \in (0,1)$. Therefore $h_m$ is well-defined and $1$-Lispchitz. 

We also define the set $B_m = \set{(x_{k,n})_{k,n \in \nat} \in \ell^q}{(x_{m,n})_{n \in \nat} \not \in {\textstyle \bigcap_{p > a}} \ell^p}$ so that $B_m = h_m^{-1}[\ell^q \ \setminus \ {\textstyle \bigcap_{p > a}} \ell^p]$ for $m \in \nat$. The set $\ell^q \ \setminus \ {\textstyle \bigcap_{p > a}} \ell^p$ is $\bolds^0_3$ and hence $B_m$ is also $\bolds^0_3$ as the prei-mage of the former set under a continuous function. Hence $B = \textstyle\bigcap_{m \in \nat} B_m$ is a $\boldp^0_4$ subset of $\ell^q$. 

Suppose now that $Z$ is a zero-dimensional Polish space and that $P \subseteq Z$ is $\boldp^0_4$. Write $P = \textstyle \bigcap_{m \in \nat} P_m$ where $P_m$ is a $\bolds^0_3$ subset of $Z$. From Theorem \ref{theorem p complete} and its subsequent remarks there exists for each $m$ a continuous function $f_m: Z \to \ell^q$ such that $P_m = f_m^{-1}[\ell^q \ \setminus \ {\textstyle \bigcap_{p > a}} \ell^p]$ and $\norm{f_m(z)}_q^q \leq 2^{-m}$.

Define $f: Z \to \left(\ell^q\right)^\nat: f(z) = (f_m(z))_{m \in \nat}$. We write $f(z)$ as a double sequence $(x^z_{m,n})_{m,n \in \nat}$ so that $f_m(z) = (x^z_{m,n})_{n \in \nat}$ for all $m$. From the fact that $\norm{f_m(z)}_q^q \leq 2^{-m}$ it is evident that $f(z) \in \ell^q$. Moreover the function $f$ is continuous; this is because $\norm{f_m(z)}_q^q \leq 2^{-m}$ for all $z$ and the continuity of each $f_m$. Finally for all $z \in Z$ we have
\begin{align*}
z \in P 
\iff& \ (\forall m)[z \in P_m] \iff (\forall m)[f_m(z) \not \in {\textstyle \bigcap_{p > a}} \ell^p]\\
\iff& \ (\forall m)[(x^z_{m,n})_{n \in \nat} \not \in {\textstyle \bigcap_{p > a}} \ell^p] \iff f(z) \in B.
\end{align*}

\smallskip

Now we assume that $Y$ is one of ${\textstyle \bigcap_{q > b}} \ell^q$, $c_0$ or $\ell^\infty$. First we notice that for all $m$ the function $H_m: Y \to Y: (x_{k,n})_{k,n \in \nat} \mapsto (x_{m,n})_{\n}$ is well-defined and $1$-Lipschitz: this is clear when $Y = c_0, \ell^\infty$ since $H_m$ maps a sequence to a subsequence; when $Y = {\textstyle \bigcap_{q > b}} \ell^q$ the assertion follows as above with $\ell^q$ using also that the function $f(t) = t/(1+t)$, $t > -1$, is strictly increasing. Then the set $C = \textstyle \bigcap_{m \in \nat} H^{-1}_m[Y \ \setminus \ \bigcap_{p > a} \ell^p]$ is a $\boldp ^0_4$ subset of $Y$.

Regarding the continuous reductions to $C$ suppose that $P$ is a $\boldp^0_4$ subset of a zero-dimensional Polish space $Z$. We apply the first part of the proof with $b$ in the place of $q$ (if $Y = c_0, \ell^\infty$ we choose $b = q > a$) and we get a continuous function $f: Z \to \ell^b: z \mapsto (x^z_{m,n})_{m,n}$ such that 
\[
z \in P \iff (\forall m)[(x^z_{m,n})_{\n} \ \not \in \ \textstyle \bigcap_{p > a} \ell^p],
\]
for all $z$. Now as in the proof of Corollary \ref{corollary intersection above b} the function $id: \ell^b \to Y$ is continuous and $id \circ f: Z \to Y$ is the required reduction of $P$ to $C$.
\end{proof}

\section{The proof Theorem \ref{theorem p complete}}

We know from the comments following the proof of Proposition 1 in \cite{nestoridis_a_project_about_chains_of_spaces} that ${\textstyle \bigcap_{p > a}} \ell^p$ is a $\boldp^0_3$ subset of $\ell^q$. To carry out the proof it suffices to pick a $\boldp^0_3$-complete set $P_3 \subseteq \cantor$ and show that it reduces to ${\textstyle \bigcap_{p > a}} \ell^p$ via a continuous function $f: \cantor \to \ell^q$ which takes values in the closed unit ball of $\ell^q$. There is in fact a canonical choice for such $P_3$.

In the sequel we establish some terminology to be used in the proof.  A \emph{finite sequence} on a set $X$ is a function on $\set{i \in \nat}{i < n}$ to $X$, where $n \in \N$. We allow $n = 0$ in which case we mean the \emph{empty sequence} $\emptyset$. In general a finite sequence $\sigma$ will be denoted by $(\sigma(0),\dots,\sigma(n-1))$. The preceding $n$ is the \emph{length} of $\sigma$ and is denoted by $\lh(\sigma)$, so that $\sigma(i)$ is defined exactly when $i < \lh(\sigma)$, and $\sigma = (\sigma(0),\dots,\sigma(\lh(\sigma)-1))$.

By $X^{< \nat}$ we mean the set of all finite sequences of $X$. Given $\sigma, \tau \in X^{< \nat}$ we define the \emph{concatenation} $\cn{\sigma}{\tau}$ of $\sigma$ and $\tau$ to be the finite sequence that is obtained if we put $\sigma$ and $\tau$ together (starting with the former),
\[
\cn{\sigma}{\tau} = (\sigma(0),\dots, \sigma(\lh(\sigma-1)),\tau(0),\dots,\tau(\lh(\tau)-1)).
\]
We say that $\sigma$ is an \emph{initial segment} of $\tau$ or that $\tau$ \emph{extends} $\sigma$ and write $\sigma \sqsubseteq \tau$ if $\lh(\sigma) \leq \lh(\tau)$ and for all $i < \lh(\sigma)$ we have $\sigma(i) = \tau(i)$. We will write $\sigma \sqsubsetneq \tau$ when $\tau$ extends $\sigma$ properly.\smallskip

We fix the bijective function $\pair{\cdot}: \nat^2 \to \nat$ that ``moves diagonally upwards":
\[
\begin{tabular}{ccccc}
$0 = \pair{0,0}$&$2=\pair{0,1}$&$5=\pair{0,2}$&$9=\pair{0,3}$&$\cdots$\\[1ex]
$1 = \pair{1,0}$&$4=\pair{1,1}$&$8=\pair{1,2}$&$\cdots$&$\cdots$\\[1ex]
$3 = \pair{2,0}$&$7 = \pair{2,1}$&$\cdots$&$\cdots$&$\cdots$\\[1ex]
$6 = \pair{3,0}$&$\cdots$&$\cdots$&$\cdots$&$\cdots$\\[1ex]
\end{tabular}
\]

The canonical $\boldp^0_3$-complete set that we will use in our proof is
\[
P_3 = \set{\alpha \in \cantor}{(\forall i)(\exists j_0)(\forall j \geq j_0)[\alpha(\pair{i,j}) = 0]}.
\]
(This is up to homeomorphism the same set as the $P_3$ in \cite{kechris_classical_dst} p. 179, where it is proved that the latter set is $\boldp^0_3$-complete.)\smallskip

\begin{figure}
\caption{The diagonal arrangement of of $\sigma$ when $\lh(\sigma) = 8$}\bigskip
\label{figure A}
\begin{tabular}{ccccc}
Row $0$: & $\sigma(\pair{0,0})$&$\sigma(\pair{0,1})$&$\sigma(\pair{0,2})$&\\[2ex]
Row $1$: & $\sigma(\pair{1,0})$&$\sigma(\pair{1,1})$&&\\[2ex]
Row $2$: & $\sigma(\pair{2,0})$&$\sigma(\pair{2,1})$&&\\[2ex]
Row $3$: & $\sigma(\pair{3,0})$&&&\\
\end{tabular}
\end{figure}

Using the preceding pairing function we can arrange diagonally in an array every finite sequence $\sigma$ on a set $X$, see for example Figure \ref{figure A}.

The \emph{depth} $\d(\sigma)$ of a non-empty finite sequence $\sigma$ is the row with the largest number that is reached by $\sigma$ (we enumerate the rows starting with $0$), \ie
\[
d(\sigma) = \max\set{i}{(\exists k)[\sigma\pair{i,k} \neq \emptyset]}.
\]
According to Figure \ref{figure A} $\d(\sigma) = 3$ when $\lh(\sigma) = 8$. For technical reasons we define $\d(\emptyset) = -1$.

The \emph{level} $\l(\sigma)$ of a non-empty finite sequence $\sigma$ is the row where $\sigma$ obtains its last value, \ie
\begin{align*}
\l(\sigma) 
=& \ \text{\big\{the unique $i$ for which $\pair{i,k}$ is the greatest element}\\
& \ \hspace*{45mm} \text{in the domain of $\sigma$ for some $k$\big\}}.
\end{align*}
According to Figure \ref{figure A} $\l(\sigma) = 2$ when $\lh(\sigma) = 8$.

The following properties regarding the depth and the level of a non-empty sequence are easy to see:
\begin{align*}
\l(\sigma) \leq& \ \d(\sigma)\\
\sigma \sqsubseteq \sigma' \ \Longrightarrow& \ \d(\sigma) \leq \d(\sigma')\\
\d(\cn{\sigma}{(s)}) \leq& \ \d(\sigma)+1\\
\l(\sigma) = 0 \ \Longrightarrow& \ \l(\cn{\sigma}{(s)}) = \d(\cn{\sigma}{(s)}) = \d(\sigma)+1\\
\l(\sigma) > 0 \ \Longrightarrow& \ \l(\cn{\sigma}{(s)}) = \l(\sigma) - 1 \ \text{and} \  \d(\cn{\sigma}{(s)}) = \d(\sigma)
\end{align*}
where $\sigma \in X^{<\nat}$ is non-empty and $s \in \nat$.

For reasons of exposition we will make a slight abuse of the notation and denote $\sum_{n=0}^\infty |x_n|^p$ by $\norm{x}_p^p$ where $p > 0$ and $x = (x_n)_{\n} \in \ell^p$. Of course $\norm{\cdot}_p$ is a norm only when $p \geq 1$ in which case $\norm{x}_p^p$ is the $p$-th power of $\norm{x}_p$; if $p < 1$ the expression $\norm{x}_p^p$ is just another name for $d_p(x,0) = \sum_{n=0}^\infty |x_n|^p$.

The notions of depth and level will be utilized for $X = \{0,1\}$, but on the other hand we will also be dealing with finite sequences of real numbers. We will regularly identify a finite sequence $u \in \R^{< \nat}$ with the infinite one $\cn{u}{\vec{0}} = \cn{u}{(0,0,\dots)} \in c_{00}$, so that when we write $\norm{u-v}_p^p$ for $u,v \in \R^{< \nat}$ we mean $\norm{\cn{u}{\vec{0}} - \cn{v}{\vec{0}}}_p^p$. It is then clear that
\[
\norm{u}_p^p = \sum_{n < \lh(u)} |u(n)|^p.
\]
Notice that
\[
\norm{\cn{u}{v} - u}_p^p = \norm{v}_p^p.
\]
\smallskip

\emph{Claim:} Suppose that $q > p_0 > p_1 > \dots > p_k > p_{k+1} > 0$, $r_0, \dots, r_k, M, \ep > 0$ and that $u \in \R^{< \nat}$ is such that $\norm{u}^{p_i}_{p_i} < r_i$ for all $i=0,1,\dots,k$. Then there exists a non-empty finite sequence $v \in [0,\infty)^{<\nat}$ such that
\begin{align*}
\norm{v}_q^q <& \ \ep,\\
\norm{\cn{u}{v}}^{p_i}_{p_i} <& \ r_i, \quad i=0,1,\dots,k\\
\norm{\cn{u}{v}}_{p_{k+1}}^{p_{k+1}} >& \ M. 
\end{align*}\smallskip

\emph{Proof of the Claim.}

Put $\delta = \min \set{r_i - \norm{u}_{p_i}^{p_i}}{i=0,\dots,k} > 0$ and consider a sequence $(x_n)_{\n}$ of non-negative real numbers which is a member of $\ell^{p_k}  \setminus \ell^{p_{k+1}}$. 

Then there is $n_0$ such that $\sum_{n = n_0}^\infty x_n^{p_k} < \min\{\delta,\ep\}$ and $x_n < 1$ for all $n \geq n_0$. It follows that
\[
\sum_{n = n_0}^\infty x_n^{p_i} \leq \sum_{n = n_0}^\infty x_n^{p_k} < \delta
\]
for all $i=0,\dots,k$. Further 
\[
\sum_{n=n_0}^\infty x_n^q \leq \sum_{n=n_0}^\infty x_n^{p_k} < \ep.
\]
Since $(x_n)_{\n} \not \in \ell^{p_{k+1}}$ we have $\sum_{n = n_0}^\infty x_n^{p_{k+1}} = \infty$. Therefore there is some $n_1 \geq n_0$ such that $\sum_{n=n_0}^{n_1} x_n^{p_{k+1}} > M$.

Take $v = (x_{n_0},\dots,x_{n_1})$. Clearly $\norm{v}_q^q = \sum_{n = n_0}^{n_1} x_n^q \leq \sum_{n=n_0}^\infty x_n^q < \ep$. For all $i=0,\dots,k$ we have
\begin{align*}
\norm{\cn{u}{v}}_{p_i}^{p_i} 
=& \ \sum_{n < \lh(u)} |u(n)|^{p_i} + \sum_{n=n_0}^{n_1} x_n^{p_i}\\
\leq& \ \norm{u}_{p_i}^{p_i} + \sum_{n=n_0}^{\infty} x_n^{p_i}\\
<& \ \norm{u}_{p_i}^{p_i} + \delta_0\\
\leq& \ r_i \ .
\end{align*}
Moreover
\[
\norm{\cn{u}{v}}_{p_{k+1}}^{p_{k+1}} \geq \sum_{n=n_0}^{n_1} x_n^{p_{k+1}} > M.
\]
This concludes the proof of the claim.

\bigskip

In the sequel we fix a sequence of real numbers $(p_i)_{i \in \nat}$ with $p_i \downarrow a$, so that ${\textstyle \bigcap_{p > a}} \ell^p = \cap_{i \in \nat} \ell^{p_i}$, and with $p_0 < q$. For example $(p_i)_{i \in \nat}$ could be a shift of $(p^a_i)_{i \in \nat}$.\bigskip

{\em The main construction.} We show that for every non-empty $\sigma \in 2^{< \nat}$ there is a non-empty $\varphi(\sigma) \in [0,+\infty)^{<\nat}$ and natural numbers $M_i(\sigma)$, $0 \leq i \leq \d(\sigma)$ with the following properties:
\begin{align}
\label{equation proposition p complete A}
\sigma' \sqsubsetneq \sigma \ \Longrightarrow& \ \ \varphi(\sigma') \sqsubsetneq \varphi(\sigma)\\
\label{equation proposition p complete B}
& \hspace*{-17mm} \norm{\varphi(\cn{\sigma}{(s)}) - \varphi(\sigma)}_q^q < \ 2^{-(\lh(\sigma)+1)} \quad
\text{and} \quad \norm{\varphi((s))}_q^q < 2^{-1}, \quad \quad s=0,1\\
\label{equation proposition p complete C}
\norm{\varphi(\sigma)}_{p_i}^{p_i} <& \ M_i(\sigma), \quad i=0,\dots,\d(\sigma)\\
\label{equation proposition p complete E}
M_i(\cn{\sigma}{(0)}) =& \ M_i(\sigma), \quad i=0,\dots,\d(\sigma)\\
\label{equation proposition p complete F}
M_i(\cn{\sigma}{(1)}) =& \ M_i(\sigma), \quad i=0,\dots,\l(\cn{\sigma}{(1)})-1 \quad \text{if $\l(\cn{\sigma}{(1)}) > 0$}\\
\label{equation proposition p complete G}
& \hspace*{-17mm} \norm{\varphi(\cn{\sigma}{(1)})}_{p_i}^{p_i} > \lh(\sigma)+1, \quad \text{where} \ i = \l(\cn{\sigma}{(1))})
\end{align}

As to (\ref{equation proposition p complete E}) notice that $i \leq \d(\sigma) \leq \d(\cn{\sigma}{(0)})$, so that $M_i(\cn{\sigma}{(0)})$ is defined. Similarly in (\ref{equation proposition p complete F}) we have $\l(\cn{\sigma}{(1)}) - 1 \leq \d(\cn{\sigma}{(1)}) - 1 \leq \d(\sigma)+1 - 1 = \d(\sigma)$, so that $M_i(\sigma)$ and $M_i(\cn{\sigma}{(1)})$ are both defined.

The idea in order to show the above assertions is roughly as follows. When we extend $\sigma$ by $0$ we also extend $\varphi(\sigma)$ by $0$; the norms remain the same and hence the $M_i$'s can remain the same for $i \leq \d(\sigma)$. We might need to add one more $M_i$, namely $M_{\d(\sigma)+1}(\cn{\sigma}{(0)})$ if the depth of $\cn{\sigma}{(0)}$ increases by $1$ from $\d(\sigma)$, but this poses no problems. The interesting case is when we extend by $1$. Then we extend $\varphi(\sigma)$ so that for all $i < \l(\cn{\sigma}{(1)})$ the $p_i$-norm remains below $M_i(\sigma)$ and hence we can take $M_i(\cn{\sigma}{(1)})$ to be the same as $M_i(\sigma)$; on the other hand we make a substantial increase on the $p_i$-norm for $i = \l(\cn{\sigma}{(1)})$. This is possible from the preceding Claim. The $M_i$'s for $i = \l(\cn{\sigma}{(1)}), \dots, \d\cn{\sigma}{(1)})$ are easily arranged.\smallskip

Formally we define functions 
\[
\varphi: 2^{<\nat} \to [0,\infty)^{<\nat} \quad \text{and} \quad \psi: 2^{<\nat} \to \nat^{<\nat}: \psi(\sigma) = (M_0(\sigma),\dots,M_{\d(\sigma)}(\sigma))
\]
which satisfy the required properties. The definition is done by recursion on $\lh(\sigma)$ starting with $\lh(\sigma) = 0$.

Put $\varphi(\emptyset) = \psi(\emptyset) = \emptyset$. Assume that for some $n \in \nat$ we have the following:
\begin{list}{}{}
\item[$(D^\ast)$]: $\varphi(\sigma)$, $\psi(\sigma)$ are defined and $\lh(\psi(\sigma)) = \d(\sigma)+1$ for all $\sigma$ with length at most $n$,
\item[$(\ref{equation proposition p complete A}^\ast)$]: property (\ref{equation proposition p complete A}) holds for all $\sigma,\sigma'$ with length at most $n$,
\item[$(\ref{equation proposition p complete B}^\ast)$]: the first part of (\ref{equation proposition p complete B}) holds for all $\sigma$ with $\lh(\cn{\sigma}{(s)}) \leq n$ and if $n = 1$ the second part of (\ref{equation proposition p complete B}) holds as well,
\item[$(\ref{equation proposition p complete C}^\ast)$]: property (\ref{equation proposition p complete C}) holds for all $\sigma \neq \emptyset$ with length at most $n$,
\item[$(\ref{equation proposition p complete E}^\ast)$]: property (\ref{equation proposition p complete E}) holds for all non-empty $\sigma$ with $\lh(\cn{\sigma}{(0)}) \leq n$,
\item[$(\ref{equation proposition p complete F}^\ast)$]: property (\ref{equation proposition p complete F}) holds for all non-empty $\sigma$ with $\lh(\cn{\sigma}{(1)}) \leq n$, and
\item[$(\ref{equation proposition p complete G}^\ast)$]: property (\ref{equation proposition p complete G}) holds for all -possibly empty- $\sigma$ with $\lh(\cn{\sigma}{(1)}) \leq n$.
\end{list}
We show now that the properties $(D^\ast)$, $(\ref{equation proposition p complete A}^\ast)$-$(\ref{equation proposition p complete G}^\ast)$ hold for $n+1$. First we define $\varphi(\tau)$ and $\psi(\tau)$ when $\lh(\tau) = n+1$. There are two cases: a) $\tau = \cn{\sigma}{(0)}$ and b) $\tau = \cn{\sigma}{(1)}$ for some $\sigma$ with $\lh(\sigma) = n$.

In the case of a) we define $\varphi(\tau) = \varphi(\cn{\sigma}{(0)}) = \cn{\varphi(\sigma)}{(0)}$. 

As for $\psi(\sigma)$ suppose first that $\sigma \neq \emptyset$ so that $\d(\sigma) \geq 0$. We define $M_i(\tau) = M_i(\cn{\sigma}{(0)}) = M_i(\sigma)$ for $i=0,\dots,\d(\sigma)$. If $\d(\tau) = \d(\sigma)$ this completes the definition. Else $\d(\tau) = \d(\sigma) + 1$ and we define further $M_{\d(\tau)}(\tau)$ to be the least natural greater than $\norm{\varphi(\tau)}_{p_{\d(\tau)}}^{p_{\d(\tau)}}$. If $\sigma = \emptyset$ then $\d(\tau) = \d((0)) = 0$ and we define $M_0(\tau) = 1$.\smallskip

In the case of b) we apply the Claim from above. Assume first that $\l(\tau) = \l(\cn{\sigma}{(1)}) > 0$, so in particular $\sigma \neq \emptyset$. Put $k = \l(\tau) - 1 = \l(\cn{\sigma}{(1)}) - 1 \leq \d(\sigma)$. From the induction hypothesis $\norm{\varphi(\sigma)}_{p_i}^{p_i} < M_i(\sigma)$ for all $i=0,\dots,k$. So from the preceding claim there exists some $v \in [0,\infty)^{<\nat}$ such that $\norm{v}^q_q < 2^{-(\lh(\sigma)+1)}$, $\norm{\cn{\varphi(\sigma)}{v}}_{p_i}^{p_i}< M_i(\sigma)$ for all $i=0,\dots,k$ and $\norm{\cn{\varphi(\sigma)}{v}}_{p_{k+1}}^{p_{k+1}} > \lh(\sigma) + 1$. We define
\[
\varphi(\tau) = \varphi(\cn{\sigma}{(1)}) = \cn{\varphi(\sigma)}{v} \quad \text{and} \quad M_i(\tau) = M_i(\cn{\sigma}{(1)}) = M_i(\sigma), \quad i=0,\dots,k.
\]
We define further $M_{i}(\tau)$ to be least natural greater than $\norm{\cn{\varphi(\sigma)}{v}}_{p_i}^{p_i}$ for $i=\l(\tau)$,$\dots$,$\d(\tau)$.

The remaining sub-case is when $\l(\tau) = \l(\cn{\sigma}{(1)}) = 0$. (This includes the case $\sigma = \emptyset$.) According to the Claim above there exists some $v \in [0,\infty)^{<\nat}$ such that $\norm{v}^{q}_q < 2^{-(\lh(\sigma)+1)}$ and $\norm{\cn{\varphi(\sigma)}{v}}_{p_0}^{p_0} > \lh(\sigma) + 1$. We define
\[
\varphi(\tau) = \varphi(\cn{\sigma}{(1)}) = \cn{\varphi(\sigma)}{v} 
\]
and $M_i(\tau)$ to be the least natural greater than $\norm{\cn{\varphi(\sigma)}{v}}_{p_i}^{p_i}$ for $i=0$,$\dots$,$\d(\tau)$. This settles $(D^\ast)$ for $n+1$.\smallskip

It is not hard to verify that the properties $(\ref{equation proposition p complete A}^\ast)$-$(\ref{equation proposition p complete G}^\ast)$ hold for $n+1$ and the inductive step is done.\smallskip

The properties $(\ref{equation proposition p complete A})$-$(\ref{equation proposition p complete G})$ are immediate from $(\ref{equation proposition p complete A}^\ast)$-$(\ref{equation proposition p complete G}^\ast)$ for sufficiently large $n$.\smallskip

\emph{The definition of the reduction.} We take the function $\varphi: 2^{< \nat} \to [0,+\infty)^{< \nat}$ and the naturals $M_i(\sigma)$, $0 \leq i \leq \d(\sigma)$, $\sigma \in 2^{<\nat}$, as above. From (\ref{equation proposition p complete A}) the sequences $\varphi((\alpha(0),\dots,\alpha(k)))$ for $k \in \nat$ are compatible and their union forms an infinite sequence of real numbers.

We define $f:\cantor \to \R^\nat$ such that $f(\alpha)$ is the unique infinite sequence of real numbers that is formed by $\varphi((\alpha(0),\dots,\alpha(k)))$ for $k \in \nat$. In other words 
\[
f(\alpha)(n) = \varphi((\alpha(0),\dots,\alpha(k)))(n)
\]
for all large $k$, where $\n$.\smallskip

It is helpful to take the sequence $(u^\alpha_k)_{k \in \nat}$ of members of $[0,+\infty)^{<\nat}$ with $u_0^\alpha = \varphi(\alpha(0))$ and $\cn{\varphi((\alpha(0),\dots,\alpha(k)))}{u_{k+1}^\alpha} = \varphi((\alpha(0),\dots,\alpha(k),\alpha(k+1)))$, so that
\[
f(\alpha) = u^\alpha_0 \consymbol u^\alpha_1 \consymbol \dots \consymbol u^\alpha_k \consymbol \dots
\]
Moreover it is clear that
\[
\varphi((\alpha(0),\dots,\alpha(k))) = u^\alpha_0 \consymbol \dots \consymbol u^\alpha_{k}.
\]

So if $\sigma = (\alpha(0),\dots,\alpha(k))$,
\[
\norm{\varphi(\cn{\sigma}{(\alpha(k+1))}) - \varphi(\sigma)}_p^p = \norm{\cn{\varphi(\sigma)}{u^\alpha_{k+1}} - \varphi(\sigma)}_p^p =  \sum_{n < \lh(u^\alpha_{k+1})}| u^\alpha_{k+1}(n)|^p  
\]
for all $p \geq 1$; in particular from (\ref{equation proposition p complete B}) we have
\begin{align}
\label{equation proposition p complete H}
\sum_{n < \lh(u^\alpha_{k})}| u^\alpha_{k}(n)|^q  < 2^{-(k+1)}
\end{align}
for all $\alpha \in \cantor$ and all $k \in \nat$. (The case $k=0$ follows directly from the second inequality of (\ref{equation proposition p complete B})).

\smallskip

\emph{The function $f$ takes values in the closed unit ball of $\ell^q$.} For all $\alpha \in \cantor$ and all $m \in \nat$ there is $k \in \nat$ such that the sequence $u^\alpha_0 \consymbol \dots \consymbol u^\alpha_k$ extends $(f(\alpha)(0),\dots,f(\alpha)(n))$; so from (\ref{equation proposition p complete H}),
\[
\sum_{n=0}^m |f(\alpha)(n)|^q \leq \sum_{t=0}^k \sum_{n < \lh(u^\alpha_{t})} |u^\alpha_t(n)|^q  \leq \sum_{t=0}^k 2^{-(t+1)} \leq 1.
\]
It follows that $f(\alpha) \in \ell^q$ and $\norm{f(\alpha)}_q^q \leq 1$.\smallskip

\emph{The function $f$ is continuous.} Applying (\ref{equation proposition p complete H}) again we observe that
\[
\norm{f(\alpha) - u^\alpha_0 \consymbol \dots \consymbol u^\alpha_k}_q^q = \sum_{t=k+1}^\infty \sum_{n < \lh(u^\alpha_t)} |u^\alpha_t(n)|^q \leq \sum_{t=k+1}^\infty 2^{-(t+1)} = 2^{-(k+1)}
\]
for all $\alpha \in \cantor$ and all $k \in \nat$.

Given $\alpha \in \cantor$ and $\ep > 0$ we choose $k$ large enough so that $2^{-(k+1)/q} < \ep/2$ if $q \geq 1$ and $2^{-(k+1)} < \ep/2$ if $0 < q < 1$. If $\beta$ agrees with $\alpha$ up to $k$, \ie if $\beta \in N(\alpha(0),\dots,\alpha(k))$ we have $u^\alpha_i = u^\beta_i$ for all $i = 0,\dots,k$.

Case $q \geq 1$:
\begin{align*}
\norm{f(\alpha) - f(\beta)}_q 
\leq& \ \norm{f(\alpha) - u^\alpha_0 \consymbol \dots \consymbol u^\alpha_k}_q + \norm{u^\beta_0 \consymbol \dots \consymbol u^\beta_k - f(\beta)}_q\\
<& \ 2^{-(k+1)/q} + 2^{-(k+1)/q} < \ep/2 + \ep/2 = \ep.
\end{align*}

Case $0 < q < 1$:

\begin{align*}
d_q(f(\alpha),f(\beta)) 
\leq& \ d_q(f(\alpha),u^\alpha_0 \consymbol \dots \consymbol u^\alpha_k \consymbol \vec{0}) + d_q(u^\beta_0 \consymbol \dots u^\beta_k \consymbol \vec{0},f(\beta))\\
=& \ \norm{f(\alpha) - u^\alpha_0 \consymbol \dots \consymbol u^\alpha_k}_q^q + \norm{u^\beta_0 \consymbol \dots \consymbol u^\beta_k - f(\beta)}_q^q\\
<& \ 2^{-(k+1)} + 2^{-(k+1)} < \ep/2 + \ep/2 = \ep.
\end{align*}

\emph{The function $f$ is a reduction between the required sets.} We fix an $\alpha \in \cantor$. We need to show that
\[
\alpha \in P_3 \iff f(\alpha) \in {\textstyle \bigcap_{p > a}} \ell^p.
\]
Assume first that $\alpha \not \in P_3$, so that for some $i \in \nat$ we have $\alpha(\pair{i,j})=1$ for infinitely many $j$. We can then find a strictly increasing sequence $(j_m)_{m \in \nat}$ of positive naturals such that $\alpha(\pair{i,j_m}) = 1$ for all $m$. We fix $m \in \nat$ and we put $\sigma = (\alpha(0),\dots,\alpha(\pair{i,j_m}-1))$, so that $(\alpha(0),\dots,\alpha(\pair{i,j_m})) = \cn{\sigma}{(1)}$. Clearly $\l(\cn{\sigma}{(1)}) = i$, so from (\ref{equation proposition p complete G}),
\begin{align*}
\sum_{n=0}^\infty |f(\alpha)(n)|^{p_i} \geq \sum_{n < \lh(\varphi(\cn{\sigma}{(1)}))}|\varphi(\cn{\sigma}{(1)})(n)|^{p_i} = \norm{\varphi(\cn{\sigma}{(1)})}_{p_i}^{p_i} > \pair{i,j_m} + 1.
\end{align*}
Since $\lim_{m \to \infty} \pair{i,j_m} = \infty$ we have that $f(\alpha) \not \in \ell^{p_i}$, and so $f(\alpha) \not \in {\textstyle \bigcap_{p > a}} \ell^p$.\smallskip

\begin{figure}
\caption{The diagonal arrangement of $\alpha \in P_3$}\bigskip
\label{figure B}
\begin{tabular}{ccccccccc}
\hphantom{Row $j$:} &&&Col. $j_0$&&&&\\[1ex]
\hphantom{Row $0$:} \ \ $\alpha(0)$&$\alpha(2)$&\dots&\tm[1.8ex]{E}$0$&$0$&\dots&$0$&\dots\\[1ex]
\hphantom{Row $1$:} \ \ $\alpha(1)$&\underline{\hphantom{000}} &\dots&$0$&$0$&\dots&$0$&\dots\\[1ex]
\hphantom{Row $2$:} \ \ \vdots& &\hphantom{\dots}&&&\vdots&&\\[1ex]
Row $i$: \ \ $\alpha(\pair{i,0})$&\underline{\hphantom{000}} &\dots&$\tm[-0.9ex]{B}0$&$0$&\dots&$0$&\dots\tm[-0.9ex]{F}\\[1ex]
\hphantom{Row $j$:} \ \ \ \ \raisebox{1ex}{\footnotesize{\vdots}}\hspace*{-3mm}\underline{\hphantom{000}}&\underline{\hphantom{000}} &\dots&\raisebox{3ex}{\scriptsize{$\sigma_0$}}$\hspace*{-3mm}\underline{\hphantom{000}}$&$\underline{\hphantom{000}}$&\dots&&\\[1ex]
\hphantom{Row $j$:} \ \ \underline{\hphantom{000}}&\underline{\hphantom{000}} &\dots&$\underline{\hphantom{000}}$&$\underline{\hphantom{000}}$&&&\\[1ex]
\hphantom{Row $j$:} \ \ \tm[0ex]{A} \underline{\hphantom{000}}&\tm[1.0ex]{D}1 &\hspace*{-4.5mm}\raisebox{1.8ex}{\scriptsize{$\sigma_0\consymbol\dots\consymbol(1)$}}\hspace*{-11.5mm}\dots&$\underline{\hphantom{000}}$&&&&\\[1ex]
\hphantom{Row $j$:} \ \ \tm[-0.3ex]{C}\underline{\hphantom{000}}&\underline{\hphantom{000}}&\dots&&&&&
\end{tabular}
\tikz[remember picture, overlay]\draw[->] (A)--(B);
\tikz[remember picture, overlay]\draw[->] (C)--(D);
\tikz[remember picture, overlay]\draw (E)--(B);
\tikz[remember picture, overlay]\draw (B)--(F);
\end{figure}

Finally assume that $\alpha \in P_3$ and we show that $f(\alpha) \in \ell^{p_i}$ for all $i$. We take some $i \in \nat$; since $\alpha \in P_3$ there some $j_0$ such that for all $j \geq j_0$ and all $i' \leq i$ we have $\alpha(\pair{i',j}) = 0$.

We put $\sigma_0 = (\alpha(0),\dots, \alpha(\pair{i,j_0}))$. Evidently $i = \l(\sigma_0) \leq \d(\sigma_0)$ and so $M_i(\tau)$ is defined for all $\tau$ extending $\sigma_0$. The idea is to show that $M_i(\sigma_0)$ controls $M_i(\tau)$ as $\tau$ extends $\sigma_0$ {\em along} $\alpha$. This is clear by (\ref{equation proposition p complete E}) when adding $0$; when adding $1$, due to the choice of $j_0$, the level of our sequence must be below $i$, see Figure \ref{figure B}, and therefore we are covered by (\ref{equation proposition p complete F}).

To make the latter precise we claim that 
\begin{align}
\label{equation proposition p complete I}
\text{$M_i(\sigma_0) = M_i(\tau)$ for all $\tau = (\alpha(0),\dots,\alpha(k))$, where $k \geq \pair{i,j_0}$.}
\end{align}
We prove this by induction on $k$. If $k = \pair{i,j_0}$ then $\tau = \sigma_0$ and the assertion is trivial. Assume that (\ref{equation proposition p complete I}) holds for some $k \geq \pair{i,j_0}$. We put $\sigma = (\alpha(0),\dots,\alpha(k))$ and $\tau = (\alpha(0),\dots,\alpha(k),\alpha(k+1))$, so that $\tau = \cn{\sigma}{(\alpha(k+1))}$ and $\sigma_0 \sqsubseteq \sigma$.

From the induction hypothesis $M_i(\sigma) = M_i(\sigma_0)$. If $\alpha(k+1) = 0$ then $\tau = \cn{\sigma}{(0)}$ and from (\ref{equation proposition p complete E}), since $i \leq \d(\sigma_0)\leq \d(\sigma)$ we have $M_i(\tau) = M_i(\sigma) = M_i(\sigma_0)$. Now we assume $\alpha(k+1) = 1$ and so $\tau = \cn{\sigma}{(1)}$. Since $\alpha(\pair{i',j}) = 0$ for all $j \geq j_0$ and all $i' \leq i$ the level of the finite sequence $\tau$ is at least $i+1$ (see Figure \ref{figure B}). Hence $\l(\cn{\sigma}{(1)}) - 1 = \l(\tau) - 1 \geq i \geq 0$, so (\ref{equation proposition p complete F}) is applicable to $i$ and $\tau = \cn{\sigma}{(1)}$. From the latter it follows $M_i(\tau) = M_i(\cn{\sigma}{(1)}) = M_i(\sigma) = M_i(\sigma_0)$ and the induction step is complete.\smallskip

From (\ref{equation proposition p complete C}) and (\ref{equation proposition p complete I}) we have for all $k \geq \pair{i,j_0}$,
\begin{align*}
\sum_{t=0}^k \sum_{n < \lh(u^\alpha_t)} |u^\alpha_t(n)|^{p_i} =& \ \norm{u^\alpha_0\consymbol \dots\consymbol u^\alpha_k}_{p_i}^{p_i} = \norm{\varphi((\alpha(0),\dots,\alpha(k)))}_{p_i}^{p_i}\\
<& \ M_i((\alpha(0),\dots,\alpha(k))) = M_i(\sigma_0).
\end{align*}
It follows that $\sum_{n=0}^{\infty} |f(\alpha)(n)|^{p_i} \leq M_i(\sigma_0)$ and therefore $f(\alpha) \in \ell^{p_i}$.
\enlargethispage{4pt}


\end{document}